\newtheorem{theorem}{Theorem}[section]
\newtheorem{corollary}[theorem]{Corollary}
\newtheorem{proposition}[theorem]{Proposition}
\newtheorem{remark}[theorem]{Remark}
\newtheorem{definition}[theorem]{Definition}
\begin{document}
\title{Nilpotent Elements of Vertex Algebras}

\author {Xianzu Lin }

\date{ }
\maketitle
  {\small \it College of Mathematics and Computer Science}\\
 {\small \it Fujian Normal University, Fuzhou, 350108,
      China}\\
               {\small \it Email: linxianzu@126.com}

\begin{abstract}
Using the method of commutative algebra, we show that the set
$\mathfrak{R}$ of nilpotent elements of a vertex algebra $V$ forms
an ideal, and $V/\mathfrak{R}$ has no nonzero nilpotent elements.

\

Keywords: nilpotent element; prime ideal; vertex algebra

\ 2000 MR Subject Classification:17B69, 13A15
\end{abstract}

\section{Introduction}
In the development of the axiomatic theory of vertex algebra in
\cite{ll}, its analogy with the classical theory of commutative
rings was greatly emphasized. In this paper we further explore the
analogy between vertex algebras and commutative rings, and the main
result is the following.

\begin{theorem}\label{th1}
The set $\mathfrak{R}$ of nilpotent elements of a vertex algebra $V$
forms an ideal, and $V/\mathfrak{R}$ has no nonzero nilpotent
elements.
\end{theorem}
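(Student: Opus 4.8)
The plan is to carry the classical proof that the nilradical of a commutative ring equals the intersection of its prime ideals over to the vertex-algebra setting, once enough ``commutative algebra of vertex algebras'' has been set up. Recall that an \emph{ideal} of $V$ is a subspace $I$ with $u_n a\in I$ for all $u\in V,\ a\in I,\ n\in\mathbb Z$ (such an $I$ is then automatically $T$-stable and, by skew-symmetry, two-sided), and for ideals $I,J$ let $IJ$ denote the span of all $u_n v$ with $u\in I$, $v\in J$, $n\in\mathbb Z$. The basic vertex-algebraic input is the lemma that $IJ$ is again an ideal, with $IJ\subseteq I\cap J$, distributive over sums and satisfying enough associativity (e.g. $(IJ)K\subseteq I(JK)$) to yield $\langle a\rangle^{m+\ell}\subseteq\langle a\rangle^m\langle a\rangle^\ell$, where $\langle a\rangle$ is the principal ideal generated by $a$ and $\langle a\rangle^{k}$ its $k$-fold (left-nested) self-product. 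With this in hand, call a proper ideal $\mathfrak p$ \emph{prime} if $IJ\subseteq\mathfrak p$ forces $I\subseteq\mathfrak p$ or $J\subseteq\mathfrak p$; equivalently (using that $IJ$ is an ideal) if $V\setminus\mathfrak p$ is \emph{multiplicative} in the sense that $0\notin V\setminus\mathfrak p$ and for all $s,t\notin\mathfrak p$ some $s_nt\notin\mathfrak p$. I take the definition of a nilpotent element $a$ to be that $\langle a\rangle$ is a nilpotent ideal, i.e. $\langle a\rangle^N=0$ for some $N$ (equivalently, as one checks, that all sufficiently long iterated self-products of $a$ vanish).

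The core of the proof is the identity $\mathfrak R=\bigcap\{\mathfrak p:\mathfrak p\text{ prime}\}$. For $\subseteq$: if $a$ is nilpotent and $\mathfrak p$ is prime then $\langle a\rangle^N=0\subseteq\mathfrak p$, and since $\langle a\rangle^k=\langle a\rangle^{k-1}\langle a\rangle$, a downward induction using primeness gives $\langle a\rangle\subseteq\mathfrak p$, hence $a\in\mathfrak p$. For $\supseteq$: suppose $a$ is not nilpotent, so $\langle a\rangle^k\neq0$ for all $k$; one then produces a multiplicative set $S$ containing $a$ (built as a ``multiplicative closure'' of $\{a\}$ along a path of nonzero iterated products, whose existence is exactly what non-nilpotence provides). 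By Zorn's lemma — the obstruction is phrased in terms of elements precisely so that the union of a chain of ideals disjoint from $S$ is again disjoint from $S$ — choose an ideal $\mathfrak p$ maximal among those disjoint from $S$; it is proper. It is prime: if $I\not\subseteq\mathfrak p$ and $J\not\subseteq\mathfrak p$, then $\mathfrak p+I$ and $\mathfrak p+J$ are strictly larger, hence meet $S$, say $p_1+x\in S$ with $p_1\in\mathfrak p$, $x\in I$, and $p_2+y\in S$ with $p_2\in\mathfrak p$, $y\in J$; expanding $(p_1+x)_n(p_2+y)$ and using that $\mathfrak p$ is an ideal and $IJ\subseteq\mathfrak p$ shows every such element lies in $\mathfrak p$, contradicting the multiplicativity of $S$ (which places one of them in $S$, hence outside $\mathfrak p$). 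Thus $\mathfrak p$ is a prime avoiding $a$.

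The theorem follows quickly. Each prime is an ideal, and an arbitrary intersection of ideals is an ideal, so $\mathfrak R$ is an ideal, which is the first assertion. For the reducedness of $V/\mathfrak R$: under the order-preserving correspondence between ideals of $V/\mathfrak R$ and ideals of $V$ containing $\mathfrak R$ — compatible with the product, hence with primeness — the primes of $V/\mathfrak R$ are exactly the $\mathfrak p/\mathfrak R$ for $\mathfrak p\supseteq\mathfrak R$ prime, and their intersection is $\mathfrak R/\mathfrak R=0$. Hence a nilpotent $\bar a\in V/\mathfrak R$ lies in every prime of $V/\mathfrak R$, so $a$ lies in every prime of $V$, so $a\in\mathfrak R$ and $\bar a=0$.

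The main obstacle is the foundational lemma of the first paragraph: extracting from the Jacobi/Borcherds identity that the product of two ideals is an ideal, that this product descends to quotients and satisfies the associativity estimate above, and that the ideal-theoretic notion of nilpotence matches the concrete one in terms of iterated products (in particular that $\langle a\rangle\langle a\rangle$ is the ideal generated by $\{a_na:n\in\mathbb Z\}$, which is what lets non-vanishing of $\langle a\rangle^k$ feed the construction of $S$). These rest on a careful manipulation of the iterate formula with its delta-function bookkeeping, and — absent a grading on $V$ — one must also check that the reductions involved terminate. Once this dictionary is in place, everything else is the classical commutative-algebra argument transcribed essentially verbatim.
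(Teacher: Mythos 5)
Your proposal follows essentially the same route as the paper: both transcribe the Atiyah--Macdonald argument that the nilradical is the intersection of the prime ideals, via products of ideals, a Zorn's-lemma-maximal ideal avoiding the nonvanishing iterated self-products of a non-nilpotent element, and the correspondence of prime ideals under the quotient map. The one step you defer as the ``main obstacle'' --- that $\langle a\rangle\langle b\rangle$ is the ideal generated by $\{a_nb : n\in\mathbb{Z}\}$, which also reconciles your ideal-power definition of nilpotence with the operator-product one --- is exactly Theorem~\ref{th2}, which the paper derives in a few lines from the standard weak-associativity and weak-commutativity consequences (Propositions~\ref{p1} and~\ref{p2}) together with skew-symmetry, with no delta-function bookkeeping, no termination issue, and no grading required.
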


\begin{remark}
The first assertion was proved in \S3.10 of \cite{ll}. However,
their method was a bit complicated, and the subtleties of formal
calculus were quite involved.
\end{remark}

This paper is organized as follows. In \S2, we give some preliminary
results about ideals of vertex algebras. We also introduce the
notion of prime ideal, which is crucial for our proof. In \S3 we
generalize a classical result about commutative ring, of which the
main theorem comes as a corollary.

We assume that the readers are familiar with the axiomatic theory of
vertex algebra as introduced in \S3 and \S4 of \cite{ll}. Through
out this paper $(V,Y,1)$ will be a fixed vertex algebra.

\section{Some Preliminaries}
In this section, we present some preliminaries about ideals of
vertex algebras.

\begin{definition}
An ideal of the vertex algebra $(V,Y,1)$ is a subspace $I$ such that
for any $v\in V$ and $w\in I$,
$$Y(v,x)w\in I((x)),  \  \  Y(w,x)v\in I((x)).$$
\end{definition}

If $I$ is an ideal, we have $\mathcal {D}I\subset I$. We also have a
natural quotient vertex algebra $V/I$ together with the canonical
homomorphism $V\rightarrow V/I$.

\begin{definition}
For a subset $S$ of $V$, we define $(S)$ to be the smallest ideal
containing $S$ and we call $(S)$ the ideal generated by $S$. Then
$(S)$ is the intersection of all ideals of $V$ containing $S$. For
$x_{1},\cdots,x_{n}\in V$, we denote by $(x_{1},\cdots,x_{n})$ the
ideal generated by $\{x_{1},\cdots,x_{n}\}$. For an $F(x)\in
V[[x,x^{-1}]]$, we also denote by $(F(x))$ the ideal generated by
the set of coefficients of $F(x)$; this notion generalizes in an
obvious way to the case of several formal variables and the case of
several formal series.
\end{definition}

We will use two propositions in \S4.5 of \cite{ll} many times below;
we record them for the reader's convenience.

\begin{proposition}\label{p1}
Let $u,v,w\in V$, $p,q\in\mathbb{Z}$, then $u_{p}v_{q}w$ can be
expressed as a linear combination of elements of the form
$(u_{s}v)_{t}w$, with $s,t\in\mathbb{Z}$.
\end{proposition}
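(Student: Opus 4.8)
This is the associativity (or ``iterate'') relation for vertex algebras, and the plan is to extract it from the Jacobi identity in the form of \emph{weak associativity}: for the given $u,v,w$ there is a nonnegative integer $k$ --- which we may take as large as we wish --- such that
$$(x_0+x_2)^k\,Y(u,x_0+x_2)Y(v,x_2)w=(x_0+x_2)^k\,Y\bigl(Y(u,x_0)v,x_2\bigr)w$$
as an identity in $V[[x_0,x_0^{-1},x_2,x_2^{-1}]]$ (a standard consequence of the Jacobi identity; see \cite{ll}). The idea is then to expand both sides and compare coefficients of $x_0^a x_2^b$.

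The right-hand side is the easy side. Since $(x_0+x_2)^k$ is a polynomial and $Y(Y(u,x_0)v,x_2)w=\sum_{s,t}\bigl((u_sv)_tw\bigr)x_0^{-s-1}x_2^{-t-1}$, the coefficient of $x_0^a x_2^b$ on the right is the finite sum $\sum_{l=0}^{k}\binom{k}{l}(u_{\,l-a-1}\,v)_{\,k-l-b-1}\,w$, which is a $\mathbb{C}$-linear combination of elements of the desired form $(u_sv)_tw$. Hence every coefficient of the left-hand side is such a combination. On the left, writing $Y(v,x_2)w=\sum_q (v_qw)x_2^{-q-1}$ --- a series with $v_qw=0$ for $q$ large --- and $(x_0+x_2)^{k-p-1}=\sum_{j\ge0}\binom{k-p-1}{j}x_0^{\,k-p-1-j}x_2^{\,j}$, one finds that the coefficient of $x_0^a x_2^b$ on the left is a \emph{finite} sum $\sum_{q}\binom{k-T-1+q}{a}\,u_{T-q}\,v_q\,w$ with $T:=k-a-b-2$ held fixed; it is finite because only $q\le Q$ contribute, where $Q$ is the single bound with $v_qw=0$ for $q>Q$.

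Now fix the pair $(p,q)$ to be treated. If $q>Q$ then $v_qw=0$ and there is nothing to prove, so assume $q\le Q$, set $T=p+q$, and choose $k$ both large enough for weak associativity and large enough that $p\le k-1$. As $a$ runs over $0,1,2,\dots$, the identities above become a linear system whose unknowns are the finitely many $u_{T-q'}\,v_{q'}\,w$ with $T-q'\le k-1$ (terms with $T-q'>k-1$ never occur, so that even though there may be infinitely many nonzero $u_{p'}v_{q'}w$ with $p'+q'=T$, only finitely many enter the system --- and $u_pv_qw$ is among them). Re-indexing these unknowns by $m:=k-T-1+q'\in\{0,1,\dots,N-1\}$, the system becomes $\sum_{m}\binom{m}{a}X_m=(\text{a combination of the }(u_sv)_tw)$ for $a=0,1,\dots,N-1$, and the matrix $\bigl(\binom{m}{a}\bigr)_{0\le a,m\le N-1}$ is unitriangular, hence invertible. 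Solving it expresses each $X_m$, and in particular $u_pv_qw$, as a $\mathbb{C}$-linear combination of elements $(u_sv)_tw$.

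The only part requiring real care is this last piece of bookkeeping --- pinning down which finitely many $u_{p'}v_{q'}w$ actually appear, enlarging $k$ so as to capture the target term, and verifying the triangularity of the binomial system --- rather than any single hard estimate. As an alternative route one can work throughout with the component (Borcherds) form of the Jacobi identity and kill one of its sides by choosing $k$ so large that $u_{k+i}v=0$ for all $i\ge0$, which is legitimate since $Y(u,x)v\in V((x))$; essentially the same finiteness considerations resurface there.
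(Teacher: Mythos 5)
The paper gives no proof of this proposition at all: it is quoted verbatim from \S4.5 of \cite{ll} (where it is derived from weak associativity by exactly the kind of coefficient extraction you perform), so your write-up is essentially a self-contained reconstruction of the cited argument rather than a different route. I checked the bookkeeping and it is correct: with $b=k-T-2-a$ the binomial weight $\binom{k-T-1+q'}{a}=\binom{m}{a}$ is indeed independent of $a$ after the substitution $m=k-T-1+q'$, the terms with $m<0$ genuinely cannot contribute because a contribution to the coefficient of $x_0^ax_2^b$ requires $j=q'+b+1\ge 0$, i.e.\ $m\ge a\ge 0$, and the resulting unitriangular system is solvable, capturing $u_pv_qw$ once $k\ge p+1$. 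The only presentational quibble is that the step ``terms with $T-q'>k-1$ never occur'' deserves the one-line justification just given, since for those terms $\binom{m}{j}$ need not vanish and their absence comes from the constraint $j\ge 0$ rather than from a vanishing binomial coefficient.
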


\begin{proposition}\label{p2}
Let $u,v,w\in V$, $p,q\in\mathbb{Z}$, then $u_{p}v_{q}w$ can be
expressed as a linear combination of elements of the form
$v_{s}u_{t}w$, with $s,t\in\mathbb{Z}$.
\end{proposition}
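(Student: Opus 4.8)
The plan is to reduce the statement to a single mode-level identity coming from weak commutativity, and then to run a well-founded induction on the left-hand mode index. Throughout, fix $u,v,w$ and let $W=\mathrm{span}\{v_{s}u_{t}w : s,t\in\mathbb{Z}\}\subseteq V$; the goal is to prove $u_{p}v_{q}w\in W$ for all $p,q\in\mathbb{Z}$. By weak commutativity (a standard consequence of the Jacobi identity), there is an integer $k\geq 0$ with
\[
(x_1-x_2)^{k}\,Y(u,x_1)Y(v,x_2)w=(x_1-x_2)^{k}\,Y(v,x_2)Y(u,x_1)w .
\]

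First I would extract modes from this identity. Expanding $(x_1-x_2)^{k}=\sum_{l=0}^{k}\binom{k}{l}(-1)^{l}x_1^{k-l}x_2^{l}$ and applying $\mathrm{Res}_{x_1}\mathrm{Res}_{x_2}\,x_1^{\,p-k}x_2^{\,q}$ to both sides, the right-hand side produces a linear combination of terms $v_{s}u_{t}w$, hence an element of $W$, while the left-hand side produces $\sum_{l=0}^{k}\binom{k}{l}(-1)^{l}u_{p-l}v_{q+l}w$. Isolating the $l=0$ summand yields the key relation
\[
u_{p}v_{q}w\equiv-\sum_{l=1}^{k}\binom{k}{l}(-1)^{l}u_{p-l}v_{q+l}w \pmod{W}.
\]
The two features I would exploit are that this relation preserves the total index $p+q$ and that it rewrites $u_{p}v_{q}w$, modulo $W$, through terms $u_{a}v_{b}w$ whose left index $a=p-l$ is \emph{strictly smaller} than $p$.

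The naive reading of this recursion does not terminate, since the left index is merely driven downward with no evident floor; breaking this circularity is the main obstacle. I would resolve it using the truncation axiom $Y(v,x)w\in V((x))$, i.e. $v_{n}w=0$ for all $n\geq M$ for some $M$. On a fixed diagonal $p+q=s$, any term $u_{a}v_{b}w$ with $b\geq M$ vanishes outright, so every surviving term has $b\leq M-1$, equivalently $a=s-b\geq s-M+1$; thus the left index is bounded below among the nonzero terms. With this floor in place, the displayed relation supports a well-founded (downward) induction on the left index: the base case $a<s-M+1$ is immediate because the term is zero, and in the inductive step the relation expresses $u_{a}v_{s-a}w$ modulo $W$ in terms of $u_{a-l}v_{s-a+l}w$ with $l\geq 1$, each of strictly smaller left index and hence already in $W$ (or zero). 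I expect the only delicate points to be the careful justification of the residue extraction from the weak commutativity identity, which is where the formal-calculus bookkeeping lives, and the verification that the induction is genuinely well-founded; once the lower bound coming from the truncation axiom is in hand, the remaining steps are routine.
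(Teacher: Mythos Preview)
Your argument is correct. Weak commutativity gives the mode identity
\[
\sum_{l=0}^{k}\binom{k}{l}(-1)^{l}u_{p-l}v_{q+l}w=\sum_{l=0}^{k}\binom{k}{l}(-1)^{l}v_{q+l}u_{p-l}w\in W,
\]
and your induction on the left index, with floor supplied by the truncation condition on $Y(v,x_2)w$, is well-founded and proves $u_{p}v_{q}w\in W$ for all $p,q$. The residue extraction is legitimate since you are only picking out individual coefficients of a formal series in $V[[x_1^{\pm1},x_2^{\pm1}]]$ after multiplying by a polynomial.

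As for comparison: the paper does not supply its own proof of this proposition. It is quoted verbatim from \S4.5 of Lepowsky--Li and recorded without argument. What you have written is essentially the standard self-contained derivation one finds in that reference (weak commutativity plus truncation-based induction), so in spirit you have reconstructed the cited proof rather than found an alternative route. One small stylistic remark: calling the induction ``downward'' is slightly misleading---the recursion pushes the left index down, but the induction itself is ordinary strong induction going upward from the floor $a\geq s-M+1$; you clearly have the right picture, but it would read more cleanly phrased that way.
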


Combining proposition \ref{p1} with the $\mathcal {D}$-bracket
formulas for vertex algebras immediately implies:
\begin{corollary}
Let $S$ be a subset of $V$. Then $$(S)=spac\{v_{n}\mathcal
{D}^{i}(u)|v\in V, n\in\mathbb{Z}, i\geq 0 ,u\in S\}
$$$$=spac\{\mathcal {D}^{i}(v_{n}u)|v\in V, n\in\mathbb{Z}, i\geq 0
,u\in S\} $$$$=spac\{\mathcal {D}^{i}(v_{n}\mathcal {D}^{j}(u))|v\in
V, n\in\mathbb{Z}, i,j\geq 0 ,u\in S\}.
$$
\end{corollary}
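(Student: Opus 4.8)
\emph{Proof strategy.} Write $W_1$, $W_2$, $W_3$ for the three spans on the right-hand side, in the order displayed. The plan is to prove that each $W_k$ equals $(S)$ by establishing the two inclusions $W_k\subseteq(S)$ and $(S)\subseteq W_k$.

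The inclusions $W_k\subseteq(S)$ are immediate: every ideal of $V$ is stable under all the operations $w\mapsto v_n w$ (for $v\in V$, $n\in\mathbb Z$) and, as recorded just after the definition of ideal, under $\mathcal D$; moreover $1_{-1}u=u$ shows $S\subseteq W_k$. Since each displayed generator is built from an element of $S$ by finitely many such operations, any ideal containing $S$ --- in particular $(S)$ --- contains $W_k$.

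For the reverse inclusions it will be enough to check that $W_1$ is an ideal and that $W_1=W_2=W_3$; minimality of $(S)$ will then force $(S)\subseteq W_1=W_2=W_3$. To see $W_1$ is an ideal I would argue: it contains $S$, as noted; it is $\mathcal D$-stable, since the $\mathcal D$-bracket formula $[\mathcal D,v_n]=(\mathcal D v)_n=-n\,v_{n-1}$ gives $\mathcal D\big(v_n\mathcal D^i(u)\big)=-n\,v_{n-1}\mathcal D^i(u)+v_n\mathcal D^{i+1}(u)$, again a combination of listed generators; and it is closed under $w\mapsto v'_p w$, since Proposition \ref{p1} rewrites $v'_p\big(v_n\mathcal D^i(u)\big)$ as a linear combination of elements $(v'_s v)_t\mathcal D^i(u)\in W_1$. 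Closure under the remaining ideal operation $w\mapsto w_p v'$ would then follow from skew-symmetry $Y(w,x)v'=e^{x\mathcal D}Y(v',-x)w$, which expresses $w_p v'$ as a linear combination of elements $\mathcal D^k\big(v'_m w\big)$, reducing it to the two properties just established. The identification $W_1=W_2=W_3$ is then routine with the same commutation relation: $W_2\subseteq W_3$ on setting $j=0$; $W_2,W_3\subseteq W_1$ because $W_1$ is $\mathcal D$-stable and contains every $v_n\mathcal D^j(u)$; and $W_1\subseteq W_2$ by moving the $\mathcal D$'s off $u$ through $v_n\mathcal D(u)=\mathcal D(v_n u)+n\,v_{n-1}u$ and induction on $i$.

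The one step I expect to require genuine care, rather than formal rearrangement, is checking that $W_1$ satisfies the \emph{second} half of the ideal axiom, closure under $w\mapsto w_p v'$: this is exactly the point at which skew-symmetry must enter, and it works only because $W_1$ has already been shown stable under $\mathcal D$ and under left $n$-th products. All the rest is repeated use of Proposition \ref{p1} together with the single identity $[\mathcal D,v_n]=-n\,v_{n-1}$ (equivalently $Y(\mathcal D v,x)=\tfrac{d}{dx}Y(v,x)$).
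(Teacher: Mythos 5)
Your proof is correct and follows the route the paper intends: the paper gives no written argument beyond citing Proposition \ref{p1} and the $\mathcal{D}$-bracket formulas, and your proposal is precisely the expansion of that hint (closure under left products via Proposition \ref{p1}, $\mathcal{D}$-stability via $[\mathcal{D},v_{n}]=-nv_{n-1}$, and the equality of the three spans by commuting $\mathcal{D}$ past the products). You also correctly identify that skew-symmetry is needed for closure under $w\mapsto w_{p}v'$, an ingredient the paper leaves implicit.
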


Motivated by this corollary we introduce the following notation. Let
$\mathbbm{p},\mathbbm{q}$ be two subsets of $V$, set
$$\mathbbm{p}\mathbbm{q}=spac\{v_{n}\mathcal {D}^{i}(u)|v\in
\mathbbm{p}, n\in\mathbb{Z}, i\geq 0 ,u\in \mathbbm{p}\}
$$$$=spac\{\mathcal {D}^{i}(v_{n}u)|v\in \mathbbm{p}, n\in\mathbb{Z}, i\geq 0
,u\in \mathbbm{p}\} $$$$=spac\{\mathcal {D}^{i}(v_{n}\mathcal
{D}^{j}(u))|v\in \mathbbm{p}, n\in\mathbb{Z}, i,j\geq 0 ,u\in
\mathbbm{p}\}.
$$ The skew symmetry of vertex algebras immediately implies
$\mathbbm{p}\mathbbm{q}=\mathbbm{q}\mathbbm{p}$.

\begin{theorem}\label{th2}
For $u_{1},\cdots,u_{m},v_{1},\cdots,v_{n}\in V$, we have
$$(u_{1},\cdots,u_{m})(v_{1},\cdots,v_{n})=(\{Y(u_{i},x)v_{j}\}_{i,j}).$$
\end{theorem}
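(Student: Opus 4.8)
Write $I=(u_1,\dots,u_m)$, $J=(v_1,\dots,v_n)$, and $K=(\{Y(u_i,x)v_j\}_{i,j})$; thus $K$ is the ideal generated by all coefficients $(u_i)_pv_j$ (for all $i,j$ and $p\in\mathbb Z$), while $IJ$ is the subspace of $V$ furnished by the $\mathbbm p\mathbbm q$-notation introduced above. The plan is to prove the two inclusions $K\subseteq IJ$ and $IJ\subseteq K$ separately. I would begin by checking that $IJ$ is in fact an ideal. For a spanning element $w=a_k\mathcal D^ib$ of $IJ$ (with $a\in I$, $b\in J$) and any $c\in V$, Proposition~\ref{p1} rewrites $c_p\bigl(a_k(\mathcal D^ib)\bigr)$ as a finite linear combination of elements $(c_sa)_t(\mathcal D^ib)$; since $I$ is an ideal, $c_sa\in I$, and $\mathcal D^ib\in J$, so each such term lies in $IJ$, whence $Y(c,x)w\in IJ((x))$. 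For the other requirement, skew-symmetry writes $w_pc$ as a finite combination of elements $\mathcal D^\ell(c_tw)$; since $IJ$ is visibly $\mathcal D$-stable (from the second description of $\mathbbm p\mathbbm q$) and contains each $c_tw$ by what was just shown, this gives $w_pc\in IJ$. Granting that $IJ$ is an ideal, $K\subseteq IJ$ is immediate: $u_i\in I$ and $v_j\in J$ give $(u_i)_pv_j=(u_i)_p\mathcal D^0(v_j)\in IJ$, and an ideal containing these coefficients contains the ideal $K$ they generate.

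The substance is the reverse inclusion $IJ\subseteq K$, and the key auxiliary step is the claim that $(u_j)_q\,b''\in K$ for every generator $u_j$, every $q\in\mathbb Z$ and every $b''\in J$. To see this, expand $b''$ — via the Corollary above — as a finite combination of elements $\mathcal D^s(e_rv_{j'})$; using the $\mathcal D$-bracket formula to move all the $\mathcal D$'s to the left of $(u_j)_q$ converts $(u_j)_q\mathcal D^s(e_rv_{j'})$ into a finite combination of elements $\mathcal D^p\bigl((u_j)_{q'}(e_rv_{j'})\bigr)$; since $K$ is a $\mathcal D$-stable ideal it then suffices to see $(u_j)_{q'}(e_rv_{j'})\in K$, and this holds because Proposition~\ref{p2} rewrites it as a finite combination of elements $e_s\bigl((u_j)_tv_{j'}\bigr)$, each of which lies in $K$ (as $(u_j)_tv_{j'}$ is a coefficient of $Y(u_j,x)v_{j'}$ and $K$ is an ideal).

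Now take a spanning element $a_k\mathcal D^\ell b$ of $IJ$, with $a\in I$, $b\in J$. Writing $a$ — again via the Corollary above — as a finite combination of elements $\mathcal D^i(c_nu_j)$, and using that $(\mathcal D^iw)_k$ is a scalar multiple of $w_{k-i}$ together with $\mathcal D^\ell b\in J$, reduces the task to showing $(c_nu_j)_{k'}\,b'\in K$ for arbitrary $c\in V$, $k',n\in\mathbb Z$ and $b'\in J$. Since $c_nu_j$ is a product of the form $a_nb$, the iterate (associativity) formula of \cite{ll} — which, conversely to Proposition~\ref{p1}, expresses $(a_nb)_mw$ as a finite linear combination of elements $a_pb_qw$ and $b_qa_pw$ — lets us write $(c_nu_j)_{k'}b'$ as a finite linear combination of terms $c_p\bigl((u_j)_qb'\bigr)$ and terms $(u_j)_q\bigl(c_pb'\bigr)$. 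A term of the first kind lies in $K$ since $(u_j)_qb'\in K$ by the auxiliary step (taking $b''=b'$) and $K$ is an ideal; a term of the second kind lies in $K$ since $c_pb'\in J$ (because $J$ is an ideal), whence $(u_j)_q(c_pb')\in K$ by the auxiliary step (taking $b''=c_pb'$). Hence $IJ\subseteq K$, and the theorem follows.

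I expect the genuinely delicate point to be precisely this last reduction: "un-collapsing" the product $c_nu_j$ unavoidably produces terms in which $u_j$ acts directly on an element — here $c_pb'$ — that is not manifestly built from the generators of $K$, and the auxiliary step $(u_j)_qJ\subseteq K$ is exactly what is needed to absorb them; its proof is where Proposition~\ref{p2} together with the $\mathcal D$-stability of $K$ does the real work. Everything else — the $\mathcal D$-bracket bookkeeping, the repeated appeals to Propositions~\ref{p1} and~\ref{p2}, and skew-symmetry — is routine.
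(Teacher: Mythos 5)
Your proof is correct, but it takes a genuinely different route from the paper's. The paper first reduces to the single-generator case $(u)(v)=(Y(u,x)v)$ and then runs a short chain of inclusions between explicit spanning sets, $\mathcal{D}^{i}(w_{l}u)_{m}w'_{n}v \rightsquigarrow \mathcal{D}^{i}w_{l}(w'_{m}u)_{n}v \rightsquigarrow \mathcal{D}^{i}w_{l}v_{m}w'_{n}u \rightsquigarrow \mathcal{D}^{i}w_{l}w'_{m}v_{n}u$; the key move there is to treat $w_{l}u$ as a single element and commute the operator $(w_{l}u)_{m}$ past $w'_{n}$ by Proposition~\ref{p2}, so the product is never ``un-collapsed,'' and only Propositions~\ref{p1}, \ref{p2} and skew-symmetry are used. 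You instead un-collapse $(c_{n}u_{j})_{k'}$ via the iterate (associativity) formula of \cite{ll} --- a legitimate tool, since the paper assumes familiarity with \S3--\S4 of \cite{ll}, but one the paper's own argument avoids --- and this forces you to handle the wrong-order terms $(u_{j})_{q}(c_{p}b')$; your auxiliary containment $(u_{j})_{q}J\subseteq K$, proved with Proposition~\ref{p2} and $\mathcal{D}$-stability, is exactly what absorbs them, and your diagnosis that this is the delicate point of your route is accurate. You also verify explicitly that $IJ$ is an ideal in order to conclude $K\subseteq IJ$, whereas the paper treats that inclusion as immediate and recovers the ideal property of $IJ$ only a posteriori from the equality $IJ=K$. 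What each approach buys: the paper's argument is shorter and stays inside its announced minimal toolkit; yours is longer but more modular, isolating a reusable lemma and making explicit why the span $IJ$ is closed under the ideal operations rather than letting that fall out of the final identity.
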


\begin{proof}
It suffices to prove that $(u)(v)=(Y(u,x)v)$ for any $u,v\in V$. The
inclusion $\supset$ follows directly from the definitions. For the
reverse inclusion, we have $$(u)(v)=span\{\mathcal
{D}^{i}(w_{l}u)_{m}w'_{n}v|l,m,n\in\mathbb{Z}, i\geq 0 ,w,w'\in V\}
$$$$
\subseteq span\{\mathcal
{D}^{i}w_{l}(w'_{m}u)_{n}v|l,m,n\in\mathbb{Z}, i\geq 0 ,w,w'\in V\}
$$$$
\subseteq span\{\mathcal
{D}^{i}w_{l}v_{m}w'_{n}u|l,m,n\in\mathbb{Z}, i\geq 0 ,w,w'\in V\}
$$$$
\subseteq span\{\mathcal
{D}^{i}w_{l}w'_{m}v_{n}u|l,m,n\in\mathbb{Z}, i\geq 0 ,w,w'\in V\}
$$$$
=(Y(v,x)u)=(Y(u,x)v),\ \ \ \ \ \ \ \ \ \ \ \ \ \ \ \ \ \ \  \ \ \ \
\ \ \ \ \ \
$$
where the first and the third $\subseteq$ follow from Proposition
\ref{p2}, the second $\subset$ and the last $=$ follow from the
property of skew symmetry of vertex algebras, and the second $=$
follows from Proposition \ref{p1}.
\end{proof}

\begin{definition}
An ideal I of $V$ is prime if $I\neq V$ and if $(Y(u,x)v)\subset I$
implies $u\in I$ or $v\in I$.
\end{definition}

We have, as in the classical case, the following proposition; the
proof is routine.
\begin{proposition}\label{p3}
Let $f:V\rightarrow V'$ be a surjective homomorphism of vertex
algebras, and let $I$ be an ideal of $V'$. Then $f^{-1}(I)$ is a
prime ideal of $V$ if and only if $I$ is a prime ideal of $V'$.
\end{proposition}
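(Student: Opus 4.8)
The plan is to mimic the classical commutative-ring argument; the only point that needs genuine care is that the formal-variable notion of ``ideal generated by a set'' is compatible with a surjective homomorphism. So I would first record two auxiliary facts. \textbf{(i)} If $J$ is an ideal of $V$ then $f(J)$ is an ideal of $V'$, and if $J'$ is an ideal of $V'$ then $f^{-1}(J')$ is an ideal of $V$. Both follow directly from the definition of ideal together with $f(Y(a,x)b)=Y(f(a),x)f(b)$ (which gives $f(a_{n}b)=f(a)_{n}f(b)$ for all $n$), using surjectivity of $f$ for the first statement. \textbf{(ii)} For any subset $S\subseteq V$ one has $f((S))=(f(S))$. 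Indeed, $f((S))$ is an ideal of $V'$ by (i) and contains $f(S)$, so $(f(S))\subseteq f((S))$; conversely $f^{-1}((f(S)))$ is an ideal of $V$ containing $S$, hence contains $(S)$, and applying $f$ gives $f((S))\subseteq (f(S))$. Applying (ii) to $S=\{u_{n}v\mid n\in\mathbb{Z}\}$ and using $f(u_{n}v)=f(u)_{n}f(v)$ yields the identity I will use repeatedly: $f\big((Y(u,x)v)\big)=(Y(f(u),x)f(v))$.

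Next I would dispose of properness. Since $f$ is surjective, $f(f^{-1}(J'))=J'$ for every subset $J'\subseteq V'$, so $f^{-1}(I)=V$ if and only if $I=V'$; equivalently, $f^{-1}(I)\neq V$ if and only if $I\neq V'$. I also note the trivial equivalence $(Y(a,x)b)\subseteq f^{-1}(I)\iff f\big((Y(a,x)b)\big)\subseteq I$, valid for preimages of arbitrary subsets.

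With these in hand the two implications are immediate. Assume $I$ is prime. Then $f^{-1}(I)\neq V$. If $u,v\in V$ satisfy $(Y(u,x)v)\subseteq f^{-1}(I)$, then by the preceding observations $(Y(f(u),x)f(v))=f\big((Y(u,x)v)\big)\subseteq I$, so primeness of $I$ gives $f(u)\in I$ or $f(v)\in I$, i.e. $u\in f^{-1}(I)$ or $v\in f^{-1}(I)$; hence $f^{-1}(I)$ is prime. Conversely, assume $f^{-1}(I)$ is prime. Then $I\neq V'$. Given $u',v'\in V'$ with $(Y(u',x)v')\subseteq I$, choose by surjectivity $u,v\in V$ with $f(u)=u'$ and $f(v)=v'$; then $f\big((Y(u,x)v)\big)=(Y(u',x)v')\subseteq I$, so $(Y(u,x)v)\subseteq f^{-1}(I)$, and primeness of $f^{-1}(I)$ forces $u\in f^{-1}(I)$ or $v\in f^{-1}(I)$, i.e. $u'\in I$ or $v'\in I$; hence $I$ is prime.

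There is no real obstacle here. The one step I would write out in full rather than leaving to the reader is identity (ii), $f((S))=(f(S))$: it is the only place where surjectivity of $f$ is genuinely used, and it is what lets one pass freely between the ideal generated by a formal series and the image of that ideal — everything else is a verbatim translation of the commutative-ring proof.
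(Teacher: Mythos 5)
Your proof is correct and is precisely the ``routine'' argument the paper alludes to without writing out: the key compatibility $f((S))=(f(S))$, the properness check via surjectivity, and the two implications are exactly the standard commutative-algebra translation. Nothing is missing.
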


\section{The Nilpotent Elements}

\begin{definition}
An element $v$ of $V$ is said to be nilpotent if there exists a
positive integer $r$ such that $$Y(v,x_{1})\cdots Y(v,x_{r})=0.$$
\end{definition}

\begin{theorem}\label{th3}
The set $\mathfrak{R}$ of nilpotent elements $V$ is the intersection
of all the prime ideals of $V$.
\end{theorem}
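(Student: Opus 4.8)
The plan is to imitate the classical proof that the nilradical of a commutative ring is the intersection of its prime ideals; the only genuinely new work is to translate the formal-calculus definition of nilpotence into ideal language, by means of the iterate formula, Proposition \ref{p1}, and Theorem \ref{th2}.

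\emph{The inclusion $\mathfrak R\subseteq\bigcap_{\mathfrak p}\mathfrak p$.} Fix a prime ideal $\mathfrak p$. I would show by induction on $r$ that every $v$ with $Y(v,x_1)\cdots Y(v,x_r)=0$ lies in $\mathfrak p$. For $r=1$ one has $Y(v,x)=0$, hence $v=0\in\mathfrak p$. For $r\ge 2$, first note that $Y(v,z_1)\cdots Y(v,z_s)=0$ for every $s\ge r$. By the iterate formula each operator $(v_nv)_m$ is a locally finite linear combination of operators $v_av_b$, so a $k$-fold product $Y(v_nv,y_1)\cdots Y(v_nv,y_k)$ is a linear combination of $2k$-fold products of modes of $v$, hence vanishes as soon as $2k\ge r$. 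Since $2(r-1)\ge r$, every coefficient $v_nv$ of $Y(v,x)v$ satisfies $Y(v_nv,y_1)\cdots Y(v_nv,y_{r-1})=0$, so $v_nv\in\mathfrak p$ by the inductive hypothesis. Thus all coefficients of $Y(v,x)v$ lie in $\mathfrak p$, i.e. $(Y(v,x)v)\subseteq\mathfrak p$, and the definition of prime ideal (taking both of its arguments to be $v$) forces $v\in\mathfrak p$.

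\emph{The inclusion $\bigcap_{\mathfrak p}\mathfrak p\subseteq\mathfrak R$.} I would argue by contraposition: given a non-nilpotent $v$, produce a prime ideal not containing $v$ by a Zorn's lemma argument of m-system type. The key point (see below) is that one may choose an infinite sequence $v=v_0,v_1,v_2,\dots$ of \emph{nonzero} elements in which each $v_{i+1}$ is a coefficient of $Y(v_i,x)v_i$. Granting this, let $\Sigma=\{I \text{ an ideal}: v_i\notin I \text{ for all } i\}$. Then $\{0\}\in\Sigma$, and $\Sigma$ is closed under unions of chains, each $v_i$ being a single element; so Zorn's lemma yields $\mathfrak p$ maximal in $\Sigma$, and $v=v_0\notin\mathfrak p$. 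To see $\mathfrak p$ is prime, suppose $(Y(a,x)b)\subseteq\mathfrak p$ with $a\notin\mathfrak p$, $b\notin\mathfrak p$. By maximality there are $i,j$ with $v_i\in\mathfrak p+(a)$ and $v_j\in\mathfrak p+(b)$. A routine computation with the definition of an ideal (using $\mathcal D\mathfrak p\subseteq\mathfrak p$ and $(a)(a)\subseteq(a)$) shows that $v_i\in\mathfrak p+(a)$ implies $(v_i)(v_i)=(Y(v_i,x)v_i)\subseteq\mathfrak p+(a)$; as $v_{i+1}$ is a coefficient of $Y(v_i,x)v_i$, this gives $v_{i+1}\in\mathfrak p+(a)$, and inductively $v_k\in\mathfrak p+(a)$ for all $k\ge i$, and likewise $v_k\in\mathfrak p+(b)$ for all $k\ge j$. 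For $k\ge\max(i,j)$, monotonicity of the product of ideals together with $(\mathfrak p+(a))(\mathfrak p+(b))\subseteq\mathfrak p+(a)(b)=\mathfrak p+(Y(a,x)b)\subseteq\mathfrak p$ gives $v_{k+1}\in(v_k)(v_k)\subseteq\mathfrak p$, contradicting $\mathfrak p\in\Sigma$. Hence $\mathfrak p$ is prime and $v\notin\mathfrak p$.

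\emph{The main obstacle.} Both inclusions become routine once, for a non-nilpotent $v$, one has the infinite chain of nonzero descendants $v=v_0,v_1,v_2,\dots$ above --- equivalently, once one knows that the formal-calculus definition of nilpotence coincides with the condition that every such descendant chain eventually reaches $0$. One implication is the index-halving estimate from the first inclusion: a nilpotent $v$ of index $r$ has every descendant chain reach $0$ within about $\log_2 r$ steps. The converse is where I expect the real work to lie: it amounts to showing that a non-nilpotent element always has a non-nilpotent coefficient in $Y(v,x)v$, and since the relevant descendant ``tree'' is infinitely branching, well-foundedness alone does not bound its depth, so one cannot simply invoke König's lemma. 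This is exactly the point at which the subtleties of formal calculus --- skew-symmetry, the action of $\mathcal D$, and Proposition \ref{p1} --- must be handled carefully, and I would expect it to be the technical heart of the argument.
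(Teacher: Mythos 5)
Your first inclusion ($\mathfrak R\subseteq\bigcap\mathfrak p$) is correct: the induction on the nilpotency index via the iterate formula, with the index-halving observation that every coefficient $v_nv$ is nilpotent of index at most $r-1$, is a legitimate (and more explicit) alternative to the paper's one-line appeal to the definition of prime ideal. The problem is the second inclusion. Your Zorn argument is predicated on the existence of an infinite chain $v=v_0,v_1,v_2,\dots$ of \emph{nonzero} elements with each $v_{i+1}$ a coefficient of $Y(v_i,x)v_i$, and you never construct it. As you yourself observe, producing such a chain amounts to showing that a non-nilpotent element has a non-nilpotent coefficient in $Y(v,x)v$; that statement is essentially equivalent to the hard direction of the theorem (a posteriori it follows \emph{from} the theorem, since $(Y(v,x)v)\subseteq\mathfrak R$ for all primes forces $v\in\mathfrak R$), and it cannot be extracted from well-foundedness because, as you note, the descendant tree is infinitely branching. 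So the proposal, as written, leaves its central lemma unproved.

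The paper shows that this obstacle can be avoided entirely by replacing your chain of \emph{elements} with a chain of \emph{ideals}: the powers $(v)^n=(Y(v,x_1)\cdots Y(v,x_{n-1})v)$. Proposition \ref{p1}, applied repeatedly, shows directly that if $Y(v,x_1)\cdots Y(v,x_{n-1})v=0$ then every $n$-fold product of modes of $v$ vanishes, i.e.\ $v$ is nilpotent; hence for non-nilpotent $v$ all these series are nonzero and the family $\Gamma$ of proper ideals $\mathbbm{m}$ with $(Y(v,x_1)\cdots Y(v,x_{n-1})v)\nsubseteq\mathbbm{m}$ for all $n$ contains $\{0\}$. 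A maximal element $\mathbbm{w}$ of $\Gamma$ excludes $v$ (take $n=1$) and is prime by exactly your computation, now run at the level of ideals: $(v)^{m+n}\subseteq(v)^m(v)^n\subseteq(\mathbbm{w}+(a))(\mathbbm{w}+(b))\subseteq\mathbbm{w}+(Y(a,x)b)$ using Theorem \ref{th2}. The point is that the condition ``$(v)^n\nsubseteq\mathbbm{m}$'' involves the entire set of coefficients of $Y(v,x_1)\cdots Y(v,x_{n-1})v$ at once, so no distinguished non-nilpotent coefficient ever needs to be selected. You should rework your second inclusion along these lines; the rest of your argument then goes through.
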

This generalizes the corresponding result about commutative rings
\cite{am
}, the proof is exactly the same.

\begin{proof}
Denote by $\mathfrak{R}'$ the intersection of all the prime ideals
of $V$. We want to show that $\mathfrak{R}=\mathfrak{R}'$. The
inclusion $\subset$ follows immediately from the definition of prime
ideal.

In order to prove the reverse inclusion, suppose that $v\in V$ is
not nilpotent. Then using Proposition \ref{p1} repeatedly implies
$$Y(v,x_{1})\cdots Y(v,x_{n-1})v\neq 0$$ for each $n>0$.  Let
$\Gamma$ be the set of proper ideals $\mathbbm{m}$ satisfying
$$(Y(v,x_{1})\cdots Y(v,x_{n-1})v)\nsubseteq \mathbbm{m}$$
 for each
$n>0$. Order $\Gamma$ by inclusion, then Zorn's lemma implies
$\Gamma$ has a maximal element; let $\mathbbm{w} $ be a maximal
element of $\Gamma$. We shall show that $\mathbbm{w} $ is a prime
ideal. Suppose $a,b\not\in\mathbbm{w} $. Then the ideals
$\mathbbm{w}+(a) $, $\mathbbm{w}+(b) $ properly contain $\mathbbm{w}
$ and therefore do not lie in $\Gamma$. Thus there exist some
$m,n>0$ such that
$$(Y(v,x_{1})\cdots Y(v,x_{m-1})v)\subseteq \mathbbm{w}+(a),
$$$$(Y(v,x_{1})\cdots Y(v,x_{n-1})v)\subseteq \mathbbm{w}+(b).$$ Now
repeatedly using Proposition \ref{p1} yields
$$(Y(v,x_{1})\cdots Y(v,x_{m+n-1})v)\subseteq
(\mathbbm{w}+(a))(\mathbbm{w}+(b))=\mathbbm{w}+(Y(a,x)b),$$
therefore we have $(Y(a,x)b)\nsubseteq \mathbbm{w}$. Hence we find a
prime ideal $\mathbbm{w}$ that does not contain $v$. This finishes
the proof.
\end{proof}

Theorem \ref{th3} immediately implies that the set $\mathfrak{R}$ of
nilpotent elements $V$ forms an ideal. Hence we have the quotient
homomorphism $f:V\rightarrow V/\mathfrak{R}$. Now combining
Proposition \ref{p3} with Theorem \ref{th3} implies that
$V/\mathfrak{R}$ has no nonzero nilpotent elements. This finishes
the proof of Theorem \ref{th1}.

\end{document}